\newtheorem {theorem}{Theorem}[section]
\newtheorem {corollary}{Corollary}[section]
\newtheorem {lemma}{Lemma}[section]
\newtheorem {example}{Example}[section]
\newtheorem {remark}{Remark}[section]
\def\ees{{\accent"5E e}\kern-.385em\raise.2ex\hbox{\char'23}\kern-.08em}
\def\EES{{\accent"5E E}\kern-.5em\raise.8ex\hbox{\char'23 }}
\def\ow{o\kern-.42em\raise.82ex\hbox{
\vrule width .12em height .0ex depth .075ex \kern-0.16em \char'56}\kern-.07em}
\def\OW{O\kern-.460em\raise1.36ex\hbox{
\vrule width .13em height .0ex depth .075ex \kern-0.16em \char'56}\kern-.07em}
\title[]{ON TANGENT CONES AT INFINITY OF ALGEBRAIC VARIETIES}
\author{C\^{O}NG-TR\`{I}NH L\^{E}$^\dagger$}
\address{Department of Mathematics, Quy Nhon University, 170 An Duong Vuong, Quy Nhon, Binh Dinh, Vietnam}
\email{lecongtrinh@qnu.edu.vn}
\author{TI\EES N-S\OW N PH\d{A}M$^{\ddagger}$}
\address{Department of Mathematics, University of Dalat, 1 Phu Dong Thien Vuong, Dalat, Vietnam}
\email{sonpt@dlu.edu.vn}
\keywords{Algebraic variety, tangent cone, tangent cone at infinity}
\subjclass[2010]{14Axx, 32Sxx}
\date{ \today}
\begin{document}
\maketitle

\begin{abstract} 
In this paper, we establish the following version at infinity of Whitney's theorem \cite{Whitney1965-1, Whitney1965-2}: {\em Geometric and algebraic tangent cones at infinity of complex algebraic varieties coincide.} The proof of this fact is based on a geometric characterization of geometric tangent cones at infinity and using the global \L ojasiewicz inequality with explicit exponents for complex algebraic varieties. We also show that tangent cones at infinity of complex algebraic varieties can be computed using Gr\"obner bases.
\end{abstract}

\section{Introduction}
Let $V \subset (\mathbb{C}^{n}, 0)$ be an analytic variety in some neighborhood of the origin $0$ in $\mathbb{C}^{n}.$ In his seminal papers \cite{Whitney1965-1, Whitney1965-2}, Whitney showed (among other results) that geometric and algebraic tangent cones of $V$ at $0$  coincide. The aim of this paper is to give a version at infinity of this result for complex algebraic varieties.

In order to state our main result, we need some notation. Let $V \subset \mathbb{C}^{n}$ be an algebraic variety. The {\em geometric tangent cone $C_{g, \infty}(V)$ of $V$ at infinity} is defined by the set of ``tangent'' vectors, in the sense that $v \in C_{g, \infty}(V)$ if, and only if, there exist a sequence $\{x_k\}_{k \in \mathbb{N}} \subset V, \|x_k\| \to \infty,$ and a sequence of numbers $\{t_k\}_{k \in \mathbb{N}} \subset \mathbb{C}$ such that $t_k x_k \to v$.   By the {\em algebraic tangent cone $C_{a, \infty}(V)$ of $V$ at infinity} we mean the set 
$$C_{a, \infty}(V) := \left \{ v \in \mathbb{C}^{n}\, | \,  f^*(v) = 0  \mbox{ for all } f\in \bold{I}(V)\right\},$$
where $\bold{I}(V)$ denotes the ideal defining $V$, and for each polynomial $f\in \bold{I}(V)$, $f^*$ 
denotes  its  homogeneous component of highest degree. 

The main result of this paper can be formulated as follows.

\begin{theorem} \label{Whitney-at-infinity}
Let $V \subset \mathbb{C}^n$ be an algebraic variety. Then $C_{a, \infty}(V) = C_{g, \infty}(V).$
\end{theorem}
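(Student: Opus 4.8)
The plan is to prove the two inclusions separately, the inclusion $C_{g,\infty}(V)\subseteq C_{a,\infty}(V)$ being elementary and the reverse one resting on a metric description of $C_{g,\infty}(V)$ together with a global \L ojasiewicz inequality with explicit exponents. First, for $C_{g,\infty}(V)\subseteq C_{a,\infty}(V)$: given $v\in C_{g,\infty}(V)$, pick $x_k\in V$ and $t_k\in\mathbb C$ with $\|x_k\|\to\infty$ and $t_kx_k\to v$; since $\|t_kx_k\|$ stays bounded while $\|x_k\|\to\infty$, necessarily $t_k\to 0$. For $f\in\mathbf{I}(V)$, $f\ne 0$, write $f=\sum_{j=0}^d f_j$ with $f_j$ homogeneous of degree $j$, $d=\deg f$, so $f^*=f_d$; using homogeneity and $f(x_k)=0$,
\[
0=t_k^{\,d}f(x_k)=\sum_{j=0}^{d}t_k^{\,d-j}f_j(t_kx_k),
\]
and letting $k\to\infty$ all terms with $j<d$ vanish (each carries a positive power of $t_k\to 0$, while $f_j(t_kx_k)\to f_j(v)$), leaving $f^*(v)=f_d(v)=0$. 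As $f$ is arbitrary, $v\in C_{a,\infty}(V)$. (We may assume throughout that $V$ is unbounded; the cases $\dim V\le 0$ are trivial under the obvious conventions.)

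For the reverse inclusion the first step is the geometric characterization: for $v\ne 0$ I claim that $v\in C_{g,\infty}(V)$ if and only if there is a sequence $x_k\in V$ with $\|x_k\|\to\infty$ and $\operatorname{dist}(x_k,\mathbb Cv)=o(\|x_k\|)$, i.e. $\liminf_{x\in V,\ \|x\|\to\infty}\operatorname{dist}(x,\mathbb Cv)/\|x\|=0$. Both implications are short: projecting $x_k$ orthogonally onto the line $\mathbb Cv$ and rescaling by the reciprocal of that projection converts one assertion into the other. In contrapositive form, $v\notin C_{g,\infty}(V)$ means that $\operatorname{dist}(x,\mathbb Cv)\ge\varepsilon\|x\|$ for all $x\in V$ with $\|x\|\ge R_0$, for some $\varepsilon,R_0>0$; a one-line triangle-inequality estimate then yields
\[
\operatorname{dist}(Rv,V)\ \ge\ c\,R\qquad\text{for all real }R\gg 0,
\]
with $c=c(\varepsilon,\|v\|)>0$. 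Hence it suffices to show that $v\in C_{a,\infty}(V)$ forces $\operatorname{dist}(Rv,V)=o(R)$ as $R\to\infty$.

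The second step invokes the global \L ojasiewicz inequality with explicit exponents. Fix generators $g_1,\dots,g_m$ of $\mathbf{I}(V)$ (so $V=\{g_1=\dots=g_m=0\}$); there are explicit $\alpha\ge 1$ and $\beta\ge 0$, depending only on $n,m$ and $D:=\max_i\deg g_i$, with
\[
\operatorname{dist}(x,V)\ \le\ C\,(1+\|x\|)^{\beta}\Big(\max_i|g_i(x)|\Big)^{1/\alpha}\qquad\text{for all }x\in\mathbb C^{n}.
\]
If $v\in C_{a,\infty}(V)$ then $g_i^*(v)=0$ for every $i$ (each $g_i\in\mathbf{I}(V)$), so expanding along the ray, $g_i(Rv)=\sum_{0\le j<\deg g_i}R^{\,j}(g_i)_j(v)=O(R^{\,D-1})$, whence $\operatorname{dist}(Rv,V)=O\big(R^{\,\beta+(D-1)/\alpha}\big)$. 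Provided the exponents satisfy $\beta+(D-1)/\alpha<1$, this is $o(R)$, contradicting the lower bound of the previous step unless $v\in C_{g,\infty}(V)$. (Equivalently and more constructively: knowing $\operatorname{dist}(kv,V)=o(k)$, choose $x_k\in V$ with $\|x_k-kv\|=\operatorname{dist}(kv,V)$; then $x_k/k\to v$, $\|x_k\|\to\infty$, and $t_k:=1/k$ gives $t_kx_k\to v$.) Together with the first inclusion, this gives $C_{a,\infty}(V)=C_{g,\infty}(V)$.

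The main obstacle is exactly the exponent inequality $\beta+(D-1)/\alpha<1$: a merely qualitative \L ojasiewicz inequality at infinity, with unspecified exponents, need not make the estimate along the ray strictly sublinear, so one needs a version in which the growth exponent $\beta$ is controlled well below the \L ojasiewicz exponent $\alpha$; producing or citing such an explicit inequality for complex algebraic varieties is the technical core of the proof. Two remarks finish the picture. First, $C_{a,\infty}(V)=V(\{f^*:f\in\mathbf{I}(V)\})$, and the ideal of all leading forms in general strictly contains $(g_1^*,\dots,g_m^*)$; so to actually \emph{compute} $C_{a,\infty}(V)$ one takes $\{g_i\}$ to be a Gr\"obner basis of $\mathbf{I}(V)$ for a degree-refining monomial order, for which $\{g_i^*\}$ does generate the ideal of leading forms (this is the final assertion of the abstract) --- but for the \emph{equality} of the two cones the argument above only uses the trivial inclusion $C_{a,\infty}(V)\subseteq V(g_1^*,\dots,g_m^*)$, so any generating set works. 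Second, one could replace the \L ojasiewicz step by a curve-selection-at-infinity argument producing a Puiseux arc in $V$ running to infinity asymptotic to the line $\mathbb Cv$; I favour the \L ojasiewicz route because it delivers the effective content announced in the abstract.
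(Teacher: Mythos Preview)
Your plan is exactly the paper's: the elementary inclusion $C_{g,\infty}(V)\subseteq C_{a,\infty}(V)$, a distance characterization of $C_{g,\infty}(V)$ (the paper phrases it as $\operatorname{dist}(t_kv,V)/t_k\to 0$, equivalent to your version), and then a \L ojasiewicz estimate along the ray $Rv$. The one piece you leave open --- the exponent inequality $\beta+(D-1)/\alpha<1$ --- is precisely what the paper supplies, and it does so in two moves you omit. First, it reduces to $V$ irreducible via $C_\infty(V_1\cup\cdots\cup V_r)=\bigcup_i C_\infty(V_i)$. Second, it invokes the Ji--Koll\'ar--Shiffman inequality: for irreducible $V$ of degree $d$ there exist polynomials $g_1,\dots,g_s$ of degree at most $d$ vanishing on $V$ with
\[
\operatorname{dist}(x,V)^{d}\ \le\ c\,\max_i|g_i(x)|\qquad(x\in\mathbb{C}^n),
\]
i.e.\ $\beta=0$, $\alpha=d$, and $D\le d$, so your exponent is $(d-1)/d<1$. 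Note that the $g_i$ here are the \emph{specific} polynomials produced by that lemma, not an arbitrary generating set of $\mathbf{I}(V)$; for a random generating set the exponents in a global \L ojasiewicz inequality are typically of Koll\'ar--Nullstellensatz type ($\alpha\sim D^n$, with nonzero $\beta$), and your inequality can indeed fail. Since each $g_i$ vanishes on $V$ (hence lies in the radical ideal $\mathbf{I}(V)$), one still has $(g_i)^*(v)=0$ for $v\in C_{a,\infty}(V)$, which is all the argument needs.
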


\begin{example}{\rm
Let $V := \{(x, y) \in \mathbb{C}^2 \, | \, x^2 - y^3 = 0\}.$ We have 
$$C_{a, \infty}(V) = C_{g, \infty}(V) = \{ y = 0\}.$$
}\end{example}

\begin{remark}{\rm 
The proof for Theorem~\ref{Whitney-at-infinity} is different from the one given in \cite[Theorem 10.6]{Whitney1965-2}. In fact, the proof of the inclusion $C_{g, \infty}(V) \subseteq C_{a, \infty}(V)$ is rather straightforward (Lemma~\ref{Lemma3}), while the proof of the converse one will follow from a geometric characterization of geometric tangent cones at infinity (Lemma~\ref{Lemma4}) and the global \L ojasiewicz inequality with explicit exponents for algebraic varieties (Lemma~\ref{Lemma5}). 
}\end{remark}

The paper is organized as follows. The proof of Theorem~\ref{Whitney-at-infinity} will be given in Section~\ref{section2}.
Then in Section~\ref{section3}, using Gr\"obner bases, we compute tangent cones at infinity of algebraic varieties.

\section{Proof of Theorem~\ref{Whitney-at-infinity}} \label{section2}

Let $V \subset \mathbb{C}^n$ be an algebraic variety. By definition, we can verify that the sets $C_{g,\infty}(V)$ and $C_{a,\infty}(V)$ are closed \emph{cones} in $\mathbb C^n$ with the vertex at the origin $0 \in \mathbb{C}^{n}$, i.e. for every element $v \in C_\infty(V)$  and for every $\lambda  \in \mathbb C$, we have $\lambda v \in C_\infty(V)$, where $C_\infty(V)$ denotes for both $C_{g,\infty}(V)$ and $C_{a,\infty}(V)$. Moreover,  the following properties of these cones are obtained easily from their definitions.
\begin{lemma} \label{Lemma1}
For any two algebraic varieties $V$ and $W$ in $\mathbb{C}^{n},$ the following properties hold:
\begin{enumerate}
\item[(i)] $C_\infty(W) \subseteq C_\infty(V)$ if $W\subseteq V;$
\item[(ii)] $C_\infty(V \cup W) = C_\infty(V)\cup C_\infty(W);$
\item[(iii)]  $C_\infty(V\cap W) \subseteq C_\infty(V) \cap C_\infty(W).$
\end{enumerate}
\end{lemma}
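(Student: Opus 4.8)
The plan is to verify the three properties directly from the definitions, handling the geometric cone $C_{g,\infty}$ and the algebraic cone $C_{a,\infty}$ in parallel: in each item the argument for the geometric cone is purely set-theoretic/topological, while the argument for the algebraic cone is purely in terms of the ideals $\bold{I}(\cdot)$ and their leading forms.

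First I would prove (i). For the geometric cone: if $W \subseteq V$ and $v \in C_{g,\infty}(W)$, choose sequences $\{x_k\} \subset W$ with $\|x_k\| \to \infty$ and $\{t_k\} \subset \mathbb{C}$ with $t_k x_k \to v$; since each $x_k$ also lies in $V$, the same pair of sequences witnesses $v \in C_{g,\infty}(V)$. For the algebraic cone: $W \subseteq V$ gives $\bold{I}(V) \subseteq \bold{I}(W)$, so if $f^*(v) = 0$ for every $f \in \bold{I}(W)$ then in particular $f^*(v) = 0$ for every $f \in \bold{I}(V)$, i.e. $v \in C_{a,\infty}(V)$. Once (i) is available, property (iii) is immediate by applying it to the inclusions $V \cap W \subseteq V$ and $V \cap W \subseteq W$, and the inclusion ``$\supseteq$'' in (ii) follows by applying it to $V \subseteq V \cup W$ and $W \subseteq V \cup W$.

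The only step that needs an actual idea is the inclusion ``$\subseteq$'' in (ii). For the geometric cone I would use a subsequence/pigeonhole argument: given $v \in C_{g,\infty}(V \cup W)$ with witnessing sequences $\{x_k\} \subset V \cup W$ and $\{t_k\}$, either infinitely many $x_k$ lie in $V$ or infinitely many lie in $W$, and passing to that subsequence shows $v \in C_{g,\infty}(V)$ or $v \in C_{g,\infty}(W)$. For the algebraic cone I would argue contrapositively: if $v \notin C_{a,\infty}(V)$ and $v \notin C_{a,\infty}(W)$, pick $f \in \bold{I}(V)$ and $g \in \bold{I}(W)$ with $f^*(v) \neq 0$ and $g^*(v) \neq 0$; then $fg$ vanishes on $V \cup W$, so $fg \in \bold{I}(V \cup W)$, and since $\mathbb{C}[x_1, \dots, x_n]$ is an integral domain the leading form is multiplicative, $(fg)^* = f^* g^*$, whence $(fg)^*(v) = f^*(v)\, g^*(v) \neq 0$ and therefore $v \notin C_{a,\infty}(V \cup W)$. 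I expect this multiplicativity of leading forms (together with the pigeonhole passage to a subsequence) to be the only non-bookkeeping ingredient; everything else follows formally from the definitions and from (i).
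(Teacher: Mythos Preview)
Your argument is correct and complete. The paper itself does not give a proof of this lemma at all: it simply asserts that ``the following properties of these cones are obtained easily from their definitions'' and moves on, so your write-up is more detailed than the paper's treatment rather than different from it. The only substantive steps you identify---the pigeonhole passage to a subsequence for the geometric ``$\subseteq$'' in (ii), and the multiplicativity $(fg)^* = f^*g^*$ of leading forms in an integral domain for the algebraic ``$\subseteq$'' in (ii)---are exactly the small observations one needs, and everything else is indeed bookkeeping from (i).
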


\begin{remark}\rm 
In general we cannot replace ``$\subseteq$''  by ``$=$'' in  (iii).  Indeed, let us  consider the algebraic varieties $V$ and $W$ in $\mathbb{C}^2$ defined  respectively by $x = 0$ and $y - x^2 = 0.$
Then $C_\infty(V) = C_\infty(W) = \{x = 0\}$, while $C_\infty(V\cap W) = \{0\}.$ Hence in this case, $C_\infty(V\cap W) \varsubsetneq C_\infty(V) \cap C_\infty(W).$
\end{remark}

The following statement shows that tangent vectors to $V$ at infinity may be defined by analytic curves  (compare \cite[Proposition~2]{Oshea-2004} and \cite[Theorem~11.8]{Whitney1965-2}).
\begin{lemma} \label{Lemma2}
Let $V \subset \mathbb{C}^{n}$ be an algebraic variety. Then, for each nonzero vector $v \in C_{g, \infty}(V),$ there exists an analytic curve $\varphi \colon (0, \epsilon)  \rightarrow \mathbb{C}^n$ $(\epsilon > 0)$ with $\varphi(s) \in V$ for all $s \in (0, \epsilon),$ such that 
$$\lim_{s \to 0^+} \| \varphi(s)\| = + \infty \quad \textrm{ and } \quad \lim_{s \to 0^+} \frac{\varphi(s)}{\|\varphi(s)\|} \ = \ -\lim_{s \to 0^+} \frac{\varphi'(s)}{\|\varphi'(s)\|} = \frac{v}{\|v\|}.$$
\end{lemma}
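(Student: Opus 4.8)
The plan is to produce the curve via a standard curve-selection/Puiseux argument and then fix up its asymptotic directions by reparametrization.

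First I would reduce to the case where $V$ is irreducible. Since $v \in C_{g,\infty}(V)$ and $C_{g,\infty}$ distributes over finite unions (Lemma~\ref{Lemma1}(ii)) while every algebraic variety is a finite union of its irreducible components, the vector $v$ lies in $C_{g,\infty}(W)$ for some irreducible component $W$ of $V$; a curve in $W$ works for $V$. Next, to apply curve selection at infinity, I would invert: let $\Phi \colon \mathbb{C}^n \setminus \{0\} \to \mathbb{C}^n \setminus \{0\}$ be a suitable ``inversion'' (working in the real setting underlying $\mathbb{C}^n \cong \mathbb{R}^{2n}$, the map $x \mapsto x/\|x\|^2$) so that sequences $x_k \in V$ with $\|x_k\| \to \infty$ correspond to points of $\Phi(V \setminus \{0\})$ accumulating at $0$. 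From the definition of $v \in C_{g,\infty}(V)$ we get $x_k \in V$, $\|x_k\| \to \infty$, $t_k \in \mathbb{C}$ with $t_k x_k \to v \ne 0$; hence $\|t_k\| \sim \|v\|/\|x_k\| \to 0$ and $x_k/\|x_k\| \to v/\|v\|$. So the real-algebraic (or subanalytic) set $\{(u, s) : u/\|u\|^2 \in \overline{V}, \ 0 < \|u\| \le s, \ \langle u/\|u\| - v/\|v\|\rangle \text{ small}\}$ has $0$ in its closure, and the curve selection lemma (Milnor) furnishes a real-analytic arc $s \mapsto u(s)$, $u(0)=0$, $u(s) \ne 0$ for $s>0$, with $u(s)/\|u(s)\|^2 \in V$ and $u(s)/\|u(s)\| \to v/\|v\|$. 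Setting $\psi(s) := u(s)/\|u(s)\|^2$ gives an analytic arc in $V$ with $\|\psi(s)\| \to \infty$ and $\psi(s)/\|\psi(s)\| \to v/\|v\|$, which handles the first two required conclusions.

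The remaining point — and the main obstacle — is arranging the derivative condition $\lim_{s\to 0^+} \varphi'(s)/\|\varphi'(s)\| = -v/\|v\|$ simultaneously. Here I would use the Puiseux/analyticity structure: after a power reparametrization $s = \tau^N$ we may assume $\psi$ is given by convergent Puiseux–Laurent series in a single real parameter, with a pole at $\tau = 0$; write $\psi(\tau) = \tau^{-p}(a + o(1))$ where $p \ge 1$ is the order of the pole and $a \ne 0$ is the leading coefficient vector. Then $\psi(\tau)/\|\psi(\tau)\| \to a/\|a\|$, forcing $a/\|a\| = v/\|v\|$. Differentiating the series termwise, $\psi'(\tau) = -p\,\tau^{-p-1}(a + o(1))$, so $\psi'(\tau)/\|\psi'(\tau)\| \to -a/\|a\| = -v/\|v\|$, because the leading term dominates and $-p < 0$. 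Thus $\varphi := \psi$ (after the reparametrization, and restricted to a small interval $(0,\epsilon)$) satisfies all three conditions. The one technical care needed is that the expansion is genuinely analytic in the real parameter and that ``$o(1)$'' terms do not disturb the limits of the normalized vectors — this is where one invokes that convergent Puiseux series can be differentiated term by term and that the ratio of a vector-valued analytic function to its norm has a limit equal to the normalized leading coefficient whenever the leading coefficient is nonzero.

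I would close by noting the sign: it is precisely the pole (the blow-up $\|\psi(\tau)\| \to \infty$ as $\tau \to 0^+$) that produces the minus sign relating the limiting direction of $\varphi$ and that of $\varphi'$, which is the feature distinguishing this ``at infinity'' statement from the classical local one, where the curve tends to a finite point and no such sign appears.
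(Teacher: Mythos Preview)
Your argument has a genuine gap at the phase step. From $t_k x_k \to v$ with $t_k \in \mathbb{C}$ you conclude ``$x_k/\|x_k\| \to v/\|v\|$'', but this does not follow: writing $t_k = |t_k|e^{i\theta_k}$ one only gets $e^{i\theta_k}\, x_k/\|x_k\| \to v/\|v\|$, so after passing to a subsequence $x_k/\|x_k\| \to e^{-i\theta}\,v/\|v\|$ for some $\theta$. Your curve-selection set, which imposes ``$u/\|u\|$ close to $v/\|v\|$'', then need not have $0$ in its closure, and even if you relax that condition, the curve you produce will satisfy $\psi/\|\psi\| \to e^{-i\theta}\,v/\|v\|$, forcing $a/\|a\| = e^{-i\theta}\,v/\|v\|$ in your Puiseux step rather than $v/\|v\|$. (That $x_k/\|x_k\|$ can be taken to converge to $v/\|v\|$ is precisely the content of the Remark following this lemma in the paper, so invoking it here is circular.)

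The paper repairs exactly this point, and it is the main idea you are missing. It applies the Curve Selection Lemma at infinity to the pair $(x,t)$ simultaneously, obtaining analytic curves $\psi(s)\in V$ and $t(s)\in\mathbb{C}$ with $t(s)\psi(s)\to v$. Expanding $t(s)=as^{p}+\cdots$ and $\psi_j(s)=b_j s^{q_j}+\cdots$, it then performs a \emph{complex} reparametrization $\varphi(s):=\psi(\lambda s)$ with $\lambda\in\mathbb{C}$ chosen so that $\lambda^{-p}=a$. The phase of $a$ is exactly the missing $e^{i\theta}$ above, and the substitution $s\mapsto\lambda s$ rotates the leading coefficient so that $\varphi/\|\varphi\|\to v/\|v\|$. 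Two ingredients make this legitimate and are absent from your outline: first, the real-analytic curve $\psi$ extends to a holomorphic map on a disc in $\mathbb{C}$; second, since each $f\in\mathbf{I}(V)$ satisfies $f(\psi(s))=0$ on the real interval, the identity principle gives $f(\psi(s))=0$ on the whole disc, so $\psi(\lambda s)\in V$ even for complex $\lambda$. Your inversion $u\mapsto u/\|u\|^{2}$ and the resulting $\psi(s)=u(s)/\|u(s)\|^{2}$ do not obviously admit this holomorphic extension (the norm is not holomorphic in $s$), so your framework does not readily support the needed complex rotation. The derivative computation you give at the end is fine; it is only the direction $a/\|a\|=v/\|v\|$ that is unproven.
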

\begin{proof}
Indeed, let $v$ be a  nonzero vector in $C_{g, \infty}(V).$ By definition, there exist a sequence $\{x_k\}_{k \in \mathbb{N}} \subset V, \|x_k\| \to \infty,$ and a sequence of numbers $\{t_k\}_{k \in \mathbb{N}} \subset \mathbb{C}$ such that $t_k x_k \to v.$ In view of the  Curve Selection Lemma at infinity (see \cite{Dinh2014, Milnor1968}), there exist analytic curves $\psi := (\psi_1, \ldots, \psi_n) \colon (0, \epsilon)  \rightarrow \mathbb{C}^n$ and $t \colon (0, \epsilon)  \rightarrow \mathbb{C},$ for some $\epsilon > 0,$ such that
\begin{enumerate}
\item [(a)] $\psi(s) \in V$ for all $s \in (0, \epsilon);$
\item [(b)] $\lim_{s \to 0^+} \| \psi(s)\| = + \infty;$
\item [(c)] $\lim_{s \to 0^+} t(s) \psi(s) = v.$
\end{enumerate}
Since $v \ne 0,$ we have that $t(s) \not \equiv 0,$ and so we can write
\begin{eqnarray*}
t(s) &=& a s^{p} + \textrm{ higher terms in } s,
\end{eqnarray*}
for some $a \in \mathbb{C},$ with $a \ne 0,$ and $p \in \mathbb{Z}.$ Let $J := \{j \, | \, \psi_j(t) \not\equiv 0\} \ne \emptyset.$ For each $j \in J$ we can write
\begin{eqnarray*}
\psi_j(s) &=& b_j  s^{q_j} + \textrm{ higher terms in } s,
\end{eqnarray*}
for some $b_j \in \mathbb{C},$ with $b_j \ne 0,$ and $q_j \in \mathbb{Z}.$ It follows from conditions (b) and (c) that
\begin{eqnarray*}
p &=& - \min_{j \in J} q_j > 0
\end{eqnarray*}
and
\begin{eqnarray*}
v & = &
\begin{cases}
{ab_j}  & \textrm{ if  } j \in J \textrm{ and } q_j = -p, \\
0 & \textrm{ otherwise.}
\end{cases}
\end{eqnarray*}

Let $\lambda \in \mathbb{C}$ be such that $\lambda^{-p} = a$ and we may regard $\psi$ as an analytic curve from the disc $\{s \in \mathbb{C} \, | \, |s| < \epsilon\}$ to $\mathbb{C}^n.$ Define the curve $\varphi \colon (0, \epsilon/|\lambda|)  \rightarrow \mathbb{C}^n$ by $\varphi(s) := \psi(\lambda s).$ Then, by condition~(a), $\varphi(s) \in V$ for all $s \in (0, \epsilon/|\lambda|).$ Furthermore, we have
$$\|\varphi(s)\| = c s^{-p} + \textrm{ higher terms in } s$$
for some real number $c > 0.$ Finally, a direct computation shows that
\begin{eqnarray*}
\lim_{s \to 0^+} \frac{\varphi_j(s)}{\|\varphi(s)\|} \ = \
-\lim_{s \to 0^+} \frac{\varphi_j'(s)}{\|\varphi'(s)\|} & = &
\begin{cases}
\frac{ab_j}{c}  & \textrm{ if  } j \in J \textrm{ and } q_j = -p, \\
0 & \textrm{ otherwise.}
\end{cases}
\end{eqnarray*}
This completes the proof.
\end{proof}

\begin{remark}{\rm 
By Lemma~\ref{Lemma2}, it will not alter the definition of the cone $C_{g, \infty}(V)$ if we require that the $t_k$ be real and positive. In particular, $v \in C_{g, \infty}(V) \setminus \{0\}$ if, and only if, 
there exists a sequence $\{x_k\}_{k \in \mathbb{N}} \subset V$ such that 
$$\lim_{k \to \infty}  \|x_k\| = +\infty \quad \textrm{ and } \quad  \lim_{k \to \infty} \frac{x_k}{\|x_k\|} = \frac{v}{\|v\|}.$$
}\end{remark}

\begin{lemma} \label{Lemma3}
Let $V \subset \mathbb{C}^n$ be an algebraic variety. Then $C_{g,\infty}(V)\subseteq C_{a,\infty}(V).$
\end{lemma}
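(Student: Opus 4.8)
The idea: take a nonzero $v\in C_{g,\infty}(V)$, witnessed by sequences $x_k\in V$ with $\|x_k\|\to\infty$ and $t_k\in\mathbb C$ with $t_kx_k\to v$. I need to show $f^*(v)=0$ for every $f\in\mathbf I(V)$. Fix such an $f$; write $d:=\deg f$ and $f=f^*+g$ where $\deg g\le d-1$ (if $f$ is itself homogeneous there is nothing to prove, and if $f\equiv 0$ likewise, so assume $f^*\ne 0$). The point is to exploit homogeneity: $f^*(t_kx_k)=t_k^{\,d}f^*(x_k)$, and each lower-degree piece of $g$ scales by a strictly smaller power of $t_k$.

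The key step is a case analysis on the behaviour of $t_k$. Since $t_kx_k\to v\ne 0$ and $\|x_k\|\to\infty$, we have $t_k\to 0$; and since $\|t_kx_k\|\to\|v\|>0$, the quantity $|t_k|\cdot\|x_k\|$ is bounded away from $0$ and $\infty$ for large $k$. Now evaluate $f$ at $x_k\in V$: $0=f(x_k)=f^*(x_k)+g(x_k)$. Multiply by $t_k^{\,d}$:
\[
0 \;=\; t_k^{\,d} f^*(x_k) + t_k^{\,d} g(x_k) \;=\; f^*(t_k x_k) + t_k^{\,d} g(x_k).
\]
As $k\to\infty$, $f^*(t_kx_k)\to f^*(v)$ by continuity. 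For the second term, write $g=\sum_{j=0}^{d-1} g_j$ with $g_j$ homogeneous of degree $j$; then $t_k^{\,d}g_j(x_k)=t_k^{\,d-j}\,g_j(t_kx_k)$, and since $t_k\to 0$ with $d-j\ge 1$ while $g_j(t_kx_k)\to g_j(v)$ stays bounded, each such term tends to $0$. Hence $t_k^{\,d}g(x_k)\to 0$, and passing to the limit gives $f^*(v)=0$. Since $f\in\mathbf I(V)$ was arbitrary, $v\in C_{a,\infty}(V)$, and as $0\in C_{a,\infty}(V)$ trivially (it is a cone with vertex $0$), we conclude $C_{g,\infty}(V)\subseteq C_{a,\infty}(V)$.

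I do not anticipate a genuine obstacle here — this direction really is ``rather straightforward'' as the authors say. The one point deserving a line of care is the bookkeeping that every term of $t_k^{\,d}g(x_k)$ carries at least one factor $t_k$ after rewriting via homogeneity, which is exactly why the lower-order part of $f$ disappears in the limit while $f^*$ survives; this is the algebraic shadow of the fact that $f^*$ is precisely the ``leading form'' of $f$. It is also worth noting one does not even need Lemma~\ref{Lemma2} for this inclusion (the curve-selection refinement is only needed for the reverse containment), though if preferred one could equally run the same computation along the analytic curve $\varphi$ instead of along the sequence.
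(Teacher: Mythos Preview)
Your proof is correct and follows essentially the same scaling-and-limit argument as the paper: expand $f$ into homogeneous pieces, use $f(x_k)=0$, rescale, and let $k\to\infty$ so that only the top-degree form survives. The one difference worth noting is organizational. The paper first invokes Lemma~\ref{Lemma2} (which rests on the Curve Selection Lemma at infinity) to replace the complex scalars $t_k$ by the real positive scalars $1/\|x_k\|$, and it also structures the argument as a reduction to the hypersurface case via Lemma~\ref{Lemma1}(i). You instead work directly with the complex $t_k$ from the definition and with an arbitrary $f\in\mathbf I(V)$, observing that $|t_k|\,\|x_k\|\to\|v\|$ forces $t_k\to 0$, so each rewritten term $t_k^{\,d-j}g_j(t_kx_k)$ with $d-j\ge 1$ vanishes in the limit. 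As you yourself remark, this is slightly more economical: the curve-selection machinery is genuinely unnecessary for this inclusion, and your argument makes that explicit.
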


\begin{proof} 
We first consider the case where $V$ is a hypersurface defined by a polynomial  $f \in \mathbb{C}[x_1,  \ldots,x_n].$ We can write 
$$f = f_d + f_{d - 1} + \cdots + f_0,$$
where $d := \deg f$ and each $f_i$ is homogeneous polynomial of degree $i$.  

Take any non-zero vector $v \in C_{g, \infty}(V).$ By Lemma~\ref{Lemma2}, there exists a sequence $\{x_k\}_{k \in \mathbb{N}} \subset V$ such that 
$$\lim_{k \to \infty}  \|x_k\| = +\infty \quad \textrm{ and } \quad  \lim_{k \to \infty} \frac{x_k}{\|x_k\|} = \frac{v}{\|v\|}.$$
For $k$ sufficiently large we have $\|x_k\| > 0$ and hence
\begin{eqnarray*}
0 &=& f(x_k) \ = \ f\left(\|x_k\| \frac{x_k}{\|x_k\|}\right) \\
&=&
\|x_k\|^d \left[f_d \left(\frac{x_k}{\|x_k\|}\right) + 
\frac{1}{\|x_k\|} f_{d - 1} \left(\frac{x_k}{\|x_k\|}\right) + \cdots + 
\frac{1}{\|x_k\|^d} f_{0} \left(\frac{x_k}{\|x_k\|}\right) \right],
\end{eqnarray*}
which implies that
\begin{eqnarray*}
0 &=& f_d \left(\frac{x_k}{\|x_k\|}\right) + 
\frac{1}{\|x_k\|} f_{d - 1} \left(\frac{x_k}{\|x_k\|}\right) + \cdots + 
\frac{1}{\|x_k\|^d} f_{0} \left(\frac{x_k}{\|x_k\|}\right).
\end{eqnarray*}
Letting $k \to \infty$ yields $f_d(v) = 0$, i.e. $v\in C_{a,\infty}(V).$

Now let  $V$ be  any algebraic variety in $\mathbb C^n$ and $v \in C_{g, \infty}(V).$ Take any $f \in \bold{I}(V).$ We have 
$V \subseteq \mathbf{V}(f) := \{x \in \mathbb{C}^n \, | \, f(x) = 0 \},$ and so, by Lemma~\ref{Lemma1}(i), $C_{g, \infty}(V) \subseteq C_{g, \infty}(\bold{V}(f)).$ 
It follows from the hypersurface case considered above that $v\in C_{a,\infty}(\bold{V}(f)).$ Note that $C_{a,\infty}(\bold{V}(f)) = \{w \in \mathbb{C}^n \, | \, f_d(w) = 0\}.$ Therefore, $f_d(v) = 0.$ Since $f$ is arbitrary, $v\in C_{a,\infty}(V).$ The proof is complete. 
\end{proof}

In order to show the reverse inclusion $C_{g,\infty}(V) \supseteq C_{a,\infty}(V),$ we need the following geometric characterization of the cone $C_{g, \infty}(V).$ 

\begin{lemma} \label{Lemma4}
Let $V \subset \mathbb{C}^n$ be an algebraic variety and let $v$ be a non-zero vector in $\mathbb{C}^{n}.$ Then $v \in C_{g, \infty}(V)$ if and only if there exists a sequence $\{t_k\}_{k \in \mathbb{N}}$ of positive real numbers such that 
$$\lim_{k \to \infty} t_k = +\infty \quad \textrm{ and } \quad \lim_{k \to \infty} \frac{\mathrm{dist}(t_k v, V)}{t_k} = 0,$$
where $\mathrm{dist}(x, V)$ denotes  the  usual Euclidean distance from a point $x \in \mathbb{C}^{n}$ to the variety $V.$
\end{lemma}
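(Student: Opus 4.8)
The plan is to prove the two implications of Lemma~\ref{Lemma4} separately, the forward direction being essentially immediate and the reverse direction requiring a genuine construction.

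For the forward implication, suppose $v \in C_{g,\infty}(V)\setminus\{0\}$. By the remark following Lemma~\ref{Lemma2}, there is a sequence $\{x_k\} \subset V$ with $\|x_k\| \to \infty$ and $x_k/\|x_k\| \to v/\|v\|$. The natural choice is to set $t_k := \|x_k\|/\|v\|$, so that $t_k \to +\infty$ and $t_k v$ is the point on the ray $\mathbb{R}_{>0}v$ at the same Euclidean norm as $x_k$. Then
$$\frac{\mathrm{dist}(t_k v, V)}{t_k} \ \le\ \frac{\|t_k v - x_k\|}{t_k} \ =\ \left\| v - \frac{x_k}{t_k}\right\| \ =\ \left\| v - \|v\|\,\frac{x_k}{\|x_k\|}\right\| \ \longrightarrow\ 0,$$
since $x_k/\|x_k\| \to v/\|v\|$. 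This gives the forward direction.

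For the reverse implication, suppose we are given positive reals $t_k \to +\infty$ with $\mathrm{dist}(t_k v, V)/t_k \to 0$. The plan is to pick, for each $k$, a point $y_k \in V$ realizing (or nearly realizing) the distance $\mathrm{dist}(t_k v, V)$ — here one uses that $V$ is closed, so a nearest point exists — and then show $y_k$ is a witnessing sequence for $v \in C_{g,\infty}(V)$. Write $y_k = t_k v + r_k$ where $\|r_k\| = \mathrm{dist}(t_k v, V)$, so $\|r_k\|/t_k \to 0$. First, $\|y_k\| \ge t_k\|v\| - \|r_k\| = t_k(\|v\| - \|r_k\|/t_k) \to +\infty$, since $\|v\| > 0$ is fixed; so the sequence escapes to infinity. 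Second,
$$\left\| \frac{y_k}{t_k} - v\right\| \ =\ \frac{\|r_k\|}{t_k}\ \longrightarrow\ 0,$$
hence $y_k/t_k \to v$, and since $t_k \to +\infty$, the scaled points $\frac{1}{t_k}y_k$ converge to $v$ while $\|y_k\| \to \infty$; taking the scalars $s_k := 1/t_k \in \mathbb{C}$ in the definition of $C_{g,\infty}(V)$ shows $v \in C_{g,\infty}(V)$.

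The argument is short and the only point requiring a little care is the existence of nearest points in the reverse direction: since $V$ is an algebraic variety it is closed in $\mathbb{C}^n$, and the function $y \mapsto \|t_k v - y\|$ is continuous and proper on $V$ (coercive, because $V$ is unbounded only ``at infinity''), so its infimum over $V$ is attained; alternatively one avoids this entirely by choosing $y_k \in V$ with $\|t_k v - y_k\| \le \mathrm{dist}(t_k v, V) + 1/k$, which suffices for the same estimates. I expect no serious obstacle here; the lemma is a soft reformulation of the definition, and its real purpose is to set up the distance-based formulation so that the global \L ojasiewicz inequality (Lemma~\ref{Lemma5}) can be brought to bear on the inclusion $C_{a,\infty}(V) \subseteq C_{g,\infty}(V)$.
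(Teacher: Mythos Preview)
Your proof is correct and follows essentially the same route as the paper: the same choice $t_k := \|x_k\|/\|v\|$ in the forward direction, and in the reverse direction the same selection of a nearest point $y_k \in V$ to $t_k v$ (using that $V$ is closed) followed by the estimate $\|y_k/t_k - v\| \to 0$. The only cosmetic difference is that the paper finishes the reverse direction by normalizing to show $y_k/\|y_k\| \to v/\|v\|$, whereas you invoke the original definition of $C_{g,\infty}(V)$ directly with scalars $s_k = 1/t_k$; both conclusions are equivalent.
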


\begin{proof}
$\Rightarrow$ Assume that $v \in C_{g, \infty}(V).$ By Lemma~\ref{Lemma2}, there exists a sequence $\{x_k\}_{k \in \mathbb{N}} \subset V$ such that
$$\lim_{k \to \infty} \|x_k\| = +\infty \quad \textrm{ and } \quad \lim_{k \to \infty} \frac{x_k}{\|x_k\|} = \frac{v}{\|v\|}.$$
Then we may assume that $t_k := \frac{\|x_k\|}{\|v\|} > 0$ for all $k \in \mathbb{N}.$ We have $\lim_{k \to \infty} t_k = + \infty$ and 
$$\frac{\mathrm{dist}(t_k v, V)}{t_k} \le  \frac{\|t_k v - x_k\|}{t_k} = \left\|v - \frac{\|v\|}{\|x_k\|}x_k \right\| = \|v\|\left\| \frac{v}{\|v\|} - \frac{x_k}{\|x_k\|}\right\|.$$ 
Therefore, 
$$ \lim_{k \to \infty} \frac{\mathrm{dist}(t_k v, V)}{t_k} = 0.$$

$\Leftarrow$ Conversely, assume there exists a sequence $\{t_k\}_{k \in \mathbb{N}}$ of positive real numbers such that
$$\lim_{k \to \infty} t_k = +\infty \quad \textrm{ and } \quad \lim_{k \to \infty} \frac{\mathrm{dist}(t_k v, V)}{t_k} = 0.$$

For each $k \in \mathbb{N}$,  let  $x_k$ be a point  in $V$ such that  $\mathrm{dist}(t_kv, V) = \|t_kv - x_k\|$ (such a point does always exist because $V$ is a closed set in the usual topology on $\mathbb{C}^{n}$).  Then
$$\lim_{k \to \infty} \left \|v - \frac{x_k}{t_k}\right \| 
= \lim_{k \to \infty} \frac{\|t_k v - x_k\|}{t_k} = 
\lim_{k \to \infty} \frac{\mathrm{dist}(t_k v, V)}{t_k} = 0.$$
Hence $\lim_{k \to \infty} \frac{x_k}{t_k} = v$ and so $\lim_{k \to \infty} \|x_k\| = +\infty$ because $\lim_{k \to \infty} t_k = + \infty.$ Furthermore we have
$$\lim_{k \to \infty} \frac{\|x_k\|}{t_k} = \|v\|.$$ 
Therefore
$$\lim_{k \to \infty} \frac{x_k}{\|x_k\|} \ = \ 
\lim_{k \to \infty} \left( \frac{x_k}{t_k} \times \frac{t_k}{\|x_k\|} \right) \ = \ 
\lim_{k \to \infty} \frac{x_k}{t_k} \times \lim_{k \to \infty}  \frac{t_k}{\|x_k\|} \ = \ \frac{v}{\|v\|}.$$
The proof is complete.
\end{proof}

Let $V\subset \mathbb C^n$ be an algebraic variety of dimension $k.$ It is well-known that, for any generic $(n - k)$-dimensional linear subspace $P$ in $\mathbb{C}^{n},$  the set $V\cap P$ is finite, and its cardinality is called the {\em degree} of $V$ and denoted by $\deg(V)$. In particular, if $V =  \mathbf{V}(f) := \{x \in \mathbb{C}^n \, | \, f(x) = 0 \}$ is a hypersurface defined by a complex polynomial $f$ of degree $d,$ then $\deg(V)=d.$
The following lemma will be useful in the sequel.

\begin{lemma}[{\cite[Lemma~8]{JKS1992}}] \label{Lemma5}  
Let $V \subset \mathbb{C}^{n}$ be an irreducible algebraic variety of degree $d$. Then there are finitely many polynomials $g_i, i=1, \ldots, s$, of degree at most $d$ vanishing on $V$ and a constant $c > 0$ such that
\begin{eqnarray*}
\mathrm{dist}(x, V)^d & \leq & c \max_{i = 1, \ldots, s} \{|g_i(x)|\} \quad 
\textrm{ for all } \quad x \in \mathbb{C}^{n}.
\end{eqnarray*}
\end{lemma}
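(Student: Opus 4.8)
The plan is to place $V$ in Noether normal position and compare it with its generic linear projections to hypersurfaces, replacing the defining ideal by norm polynomials of degree at most $d$. A unitary change of coordinates on $\mathbb{C}^n$ preserves the Euclidean distance, the degree of $V$, and the degree bound on polynomials, so after such a change we may assume that, writing $k:=\dim V$, letting $\pi\colon\mathbb{C}^n\to\mathbb{C}^k$ be the projection onto the first $k$ coordinates, and writing $x=(x',x'')$ for the splitting $\mathbb{C}^n=\mathbb{C}^k\times\mathbb{C}^{n-k}$, the restriction $\pi|_V\colon V\to\mathbb{C}^k$ is a finite surjective morphism with $[\mathbb{C}(V):\mathbb{C}(x_1,\dots,x_k)]=d$ and with every fibre consisting of at most $d$ points. (If $k=n-1$ this is the classical hypersurface situation and one argues directly: for $V=\mathbf{V}(f)$ with $\deg f=d$ and any $x\in\mathbb{C}^n$, slicing $\mathbb{C}^n$ through $x$ by a complex line in a direction $e$ with $f^*(e)\neq0$ makes $t\mapsto f(x+te)$ a polynomial of degree exactly $d$ with leading coefficient $f^*(e)$, so the product of its roots has modulus $|f(x)|/|f^*(e)|$ and the root of smallest modulus yields $\mathrm{dist}(x,V)^d\le|f^*(e)|^{-1}|f(x)|$; optimising over $\|e\|=1$ proves the lemma with $s=1$ and $g_1=f$. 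The general case imitates this, using several linear forms at once.)

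For $\lambda=(\lambda_{k+1},\dots,\lambda_n)\in\mathbb{C}^{n-k}$, regard $\ell_\lambda:=\sum_{j>k}\lambda_jx_j$ as a linear form on the last $n-k$ coordinates and let $\rho_\lambda\colon\mathbb{C}^n\to\mathbb{C}^{k+1}$ be $x\mapsto(x_1,\dots,x_k,\ell_\lambda(x''))$. For $\lambda$ in a dense Zariski-open set, $\ell_\lambda$ is a primitive element of $\mathbb{C}(V)$ over $\mathbb{C}(x_1,\dots,x_k)$; its minimal polynomial $N_\lambda(x_1,\dots,x_k,z)$ over $\mathbb{C}(x_1,\dots,x_k)$ then has coefficients in $\mathbb{C}[x_1,\dots,x_k]$ (because the polynomial ring is integrally closed), is monic of degree $d$ in $z$, becomes identically zero on $V$ after substituting $z=\ell_\lambda(x'')$, and — being the irreducible defining polynomial of the hypersurface $\rho_\lambda(V)$, which is closed since $\pi|_V$ finite forces $\rho_\lambda|_V$ finite — has total degree $\deg\rho_\lambda(V)\le\deg V=d$. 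Moreover, for every $x'\in\mathbb{C}^k$ the roots of $N_\lambda(x',\cdot)$ are among the values $\ell_\lambda(p)$ with $(x',p)\in V$ and $p\in\mathbb{C}^{n-k}$. Now fix $s:=d(n-k-1)+1$ vectors $\lambda^{(1)},\dots,\lambda^{(s)}$ that are simultaneously primitive in this sense and \emph{in general position}, meaning that any $n-k$ of them are linearly independent, and set $g_i(x):=N_{\lambda^{(i)}}\bigl(x_1,\dots,x_k,\ell_{\lambda^{(i)}}(x'')\bigr)$. Each $g_i$ is a polynomial of degree $\le d$ vanishing on $V$.

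It remains to prove the inequality with these $g_i$. Fix $x=(x',x'')$. The fibre $V\cap\pi^{-1}(x')$ is nonempty (surjectivity of $\pi|_V$) and equals $\{(x',p^{(1)}),\dots,(x',p^{(r)})\}$ with $r\le d$; put $u_j:=x''-p^{(j)}$ and $\varrho:=\min_{1\le j\le r}\|u_j\|$, so that $\mathrm{dist}(x,V)\le\varrho$ since $(x',p^{(j)})\in V$. If $\varrho=0$ we are done; otherwise all $u_j\neq0$. By the description of the roots of $N_{\lambda^{(i)}}(x',\cdot)$ we may write $N_{\lambda^{(i)}}(x',z)=\prod_{a=1}^{d}\bigl(z-\ell_{\lambda^{(i)}}(p^{(j(a))})\bigr)$ for suitable indices $j(a)\in\{1,\dots,r\}$, so that $|g_i(x)|=\prod_{a=1}^{d}\bigl|\ell_{\lambda^{(i)}}(u_{j(a)})\bigr|$. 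By general position, no nonzero vector of $\mathbb{C}^{n-k}$ is annihilated by more than $n-k-1$ of the forms $\ell_{\lambda^{(i)}}$, so by compactness of the unit sphere there is a constant $\varepsilon_0>0$ such that every unit vector $u$ satisfies $|\ell_{\lambda^{(i)}}(u)|\ge\varepsilon_0$ for all but at most $n-k-1$ indices $i$. Since $r(n-k-1)<s$, a pigeonhole count over the unit vectors $u_1/\|u_1\|,\dots,u_r/\|u_r\|$ produces an index $i$ with $|\ell_{\lambda^{(i)}}(u_j)|\ge\varepsilon_0\|u_j\|\ge\varepsilon_0\varrho$ for every $j$, whence $|g_i(x)|\ge(\varepsilon_0\varrho)^{d}$. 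Therefore $\mathrm{dist}(x,V)^{d}\le\varrho^{d}\le\varepsilon_0^{-d}\max_i|g_i(x)|$, i.e.\ the lemma holds with $c:=\varepsilon_0^{-d}$.

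The two steps carrying real content are (a) the bound $\deg N_\lambda\le d$, which is exactly the classical fact that a generic linear projection does not increase the degree of a variety — and is the only place Noether position is used — and (b) the general-position/pigeonhole lemma, which lets finitely many generic linear forms detect the Euclidean distance from a point to a zero-dimensional fibre of length $\le d$. I expect (b) to be the genuine obstacle: it is what forces the use of norm polynomials of several generic linear forms (the relations coming from the individual coordinates do not suffice), and it is where the constant $c$ is produced. Everything else — the unitary reduction, integrality of the coefficients of $N_\lambda$, finiteness of $\rho_\lambda|_V$, and the bound on fibre cardinalities — is routine.
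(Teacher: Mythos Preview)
The paper does not give its own proof of this lemma: it is quoted verbatim from \cite[Lemma~8]{JKS1992} and used as a black box in the proof of Theorem~\ref{Whitney-at-infinity}. So there is no ``paper's proof'' to compare against beyond the cited reference.

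That said, your argument is correct and is essentially the argument of Ji--Koll\'ar--Shiffman. The reduction to a unitary Noether position is legitimate because $U(n)$ is Zariski dense in $GL_n(\mathbb{C})$, so a generic unitary change of coordinates makes $\pi|_V$ finite of degree $d$; the bound $r\le d$ on fibre cardinalities, which you use in the pigeonhole step, follows from the normality of $\mathbb{C}^k$ (a finite surjective morphism onto a normal base has all fibres of cardinality at most the field degree). The identification of $N_\lambda$ with the irreducible equation of the hypersurface $\rho_\lambda(V)$ is exactly right, and with it the total-degree bound $\deg N_\lambda=\deg\rho_\lambda(V)\le d$ and the description of the roots of $N_\lambda(x',\cdot)$ as the values $\ell_\lambda(p)$ on the $\pi$-fibre are immediate. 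The compactness argument producing $\varepsilon_0$ is the standard one: if no such $\varepsilon_0$ existed one would find, after passing to a subsequence indexed by a fixed $(n-k)$-subset of the $\lambda^{(i)}$, a unit vector annihilated by $n-k$ linearly independent forms, a contradiction. With $r\le d$ the pigeonhole count $r(n-k-1)\le d(n-k-1)<s$ then yields an index $i$ good for every $u_j$, and the product formula for $g_i(x)$ gives $|g_i(x)|\ge(\varepsilon_0\varrho)^d$, which is the desired inequality.

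In short: nothing to compare, and your proof stands on its own as a faithful reconstruction of the cited result.
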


We are now ready to complete the proof of Theorem~\ref{Whitney-at-infinity}.

\begin{proof}[Proof of Theorem \ref{Whitney-at-infinity}]  
By Lemma~\ref{Lemma3}, we have $C_{g, \infty}(V) \subseteq C_{a, \infty}(V).$ Hence, it remains to show the reverse inclusion.

We first remark that it suffices to prove the inclusion $C_{a, \infty}(V) \subseteq C_{g, \infty}(V)$ for the case where the variety $V$ is irreducible. Indeed, if  $V_1, \ldots, V_r$ are irreducible components of $V$, and assume that  we have proved that $C_{g, \infty}(V_i) = C_{a, \infty}(V_i)$ for every $i$, then by Lemma~\ref{Lemma1}(ii) we have 
$$ C_{g, \infty}(V) = \bigcup_{i=1}^{r}C_{g, \infty}(V_i) = \bigcup_{i=1}^{r}C_{a, \infty}(V_i) = C_{a, \infty}(V). $$

Therefore, we  may now assume  that $V$ is irreducible of degree $d$.  By Lemma~\ref{Lemma5}, there exist  finitely many polynomials $g_i, i = 1, \ldots, s$, of degree at most $d$ vanishing on $V$ and a constant $c > 0$ such that for all $x \in \mathbb{C}^{n}$ we have 
\begin{eqnarray*}
\mathrm{dist}(x,V)^d & \leq & c \max_{i=1, \ldots, s} \{|g_i(x)|\}.
\end{eqnarray*} 

Let $v$ be a non-zero vector in $C_{a, \infty}(V).$ Since $V\subseteq \bold{V}(g_i)$ for every $i=1, \ldots, s,$ by Lemma~\ref{Lemma1}(i) we have $C_{a, \infty}(V) \subseteq C_{a, \infty}(\bold{V}(g_i)),$ and so $v\in C_{a, \infty}(\bold{V}(g_i)).$ Hence
\begin{eqnarray} \label{pt-gi}
(g_i)_{d_i}(v) &=& 0, \quad \textrm{ for all } \quad i = 1, \ldots, s,
\end{eqnarray}
where  $(g_i)_{d_i}$ denotes the homogeneous component of $g_i$ of degree $d_i := \deg g_i.$ 

Now let  $\{t_k\}_{k \in \mathbb{N}}$ be a sequence of positive real numbers such that $\lim_{k \to \infty} t_k = +\infty.$ It follows from (\ref{pt-gi}) that for every $i=1, \ldots, s,$ there exists a positive constant $c_i$ such that for all $k$ sufficiently large,
\begin{eqnarray*}
|g_i(t_kv)| & \leq & c_i t_k^{d_i-1}.
\end{eqnarray*}
Since $d_i := \deg g_i \leq d$ for $i=1, \ldots, s$, we obtain for all $k$ sufficiently large,
\begin{eqnarray*}
\max_{i=1, \ldots, s} \{ |g_i(t_kv)| \} & \leq & c' t_k^{d-1},
\end{eqnarray*}
where $c' :=\max_{i=1, \ldots,s}c_i > 0.$ Therefore
\begin{eqnarray*}
\frac{\mathrm{dist}(t_kv,V)}{t_k} & \leq & \frac{\left(c \max_{i=1, \ldots, s} \{|g_i(t_kv)|\} \right)^{\frac{1}{d}}}{t_k} \ \leq \ \left(c' c \right)^{\frac{1}{d}}t_k^{\frac{d-1}{d}-1} \ = \ 
\left (c' c \right)^{\frac{1}{d}}t_k^{\frac{-1}{d}}.
\end{eqnarray*}
By letting $k \to  \infty$, we get
\begin{eqnarray*}
\lim_{k \to  \infty} \frac{\mathrm{dist}(t_kv,V)}{t_k} & = & 0.
\end{eqnarray*}
Hence $v \in C_{g, \infty}(V)$ by Lemma~\ref{Lemma4}. The proof of the theorem is complete. 
\end{proof}

\section{Computations} \label{section3}
In this section, given an algebraic variety $V\subset \mathbb C^n,$ we shall compute the tangent cone at infinity $C_\infty(V):=C_{a,\infty}(V)=C_{g,\infty}(V)$ of $V$ by using Gr\"obner bases.

If  $\bold{I}(V)=\left<f\right> \subset \mathbb C[x_1, \ldots,x_n]$, then it is easy to see that 
$$C_{\infty}(V)=\bold{V}(f^*) := \{v\in \mathbb C^n \, | \, f^*(v)=0\}.$$
(Recall that $f^*$ stands for the homogeneous component of highest degree of $f$.)
However, if $\bold{I}(V)=\left<f_1, \ldots, f_r\right>$ has more  generators, then it need \emph{not} follow that 
$$C_{\infty}(V)=\bold{V}(f_1^*, \ldots,f_r^*):= \{v\in \mathbb C^n \, | \, f_1^*(v) = \cdots = f_r^*(v) = 0\}.$$
For example, let $V \subset \mathbb C^3$ be defined by the  ideal generated by polynomials 
$$f_1 :=xy \quad  \textrm{ and } \quad f_2 := z(x^3-y^2+z^2).$$
We have $f_1^*=xy, f_2^*=x^3z$, and 
$$\bold{V}(f_1^*,f_2^*)=\{x=0\} \cup \{y=z=0\}.$$
Consider the polynomial $f \in \mathbb{C}[x, y, z]$ defined  by
$$f(x, y, z) :=yz(y^2-z^2)=zx^2f_1-yf_2.$$
We see that $f^*=f$ vanishes on $C_{\infty}(V)$, however $f^*$ does not vanish on $\bold{V}(f_1^*,f_2^*)$. It follows that $C_{\infty}(V)\not =\bold{V}(f_1^*,f_2^*).$
 
We can overcome this difficulty by using an appropriate Gr\"obner basis for the ideal defining $V$. In order to  compute the ideal defining the cone $C_\infty(V)$ we need the following notation.

For a polynomial $f\in \mathbb C[x_1, \ldots,x_n]$ of degree $d$, its {\em homogenization} $f^h$ is a polynomial in $\mathbb C[x_0,x_1, \ldots,x_n]$ defined by 
$$f^h(x_0,x_1, \ldots,x_n) := x_0^df \left(\frac{x_1}{x_0}, \ldots,\frac{x_n}{x_0}\right), $$
where $x_0$ is a new variable. For an ideal $I$ in the ring $\mathbb C[x_1, \ldots,x_n]$, its {\em homogenization} is  the ideal 
$$ I^h:=\left<f^h \, | \, f\in I\right> \subset \mathbb C[x_0,x_1, \ldots,x_n].$$
For any $g\in \mathbb C[x_0,x_1, \ldots,x_n]$, let 
$$g|_{x_0=0} := g(0, x_1, \ldots,x_n) \in \mathbb C[x_1,\ldots,x_n]. $$
Following  \cite{GP2012} we define
$$I_\infty:=\left<g|_{x_0=0} \, | \, g \in I^h \right>\subset \mathbb C [x_1, \ldots, x_n]. $$

\begin{lemma} \label{part-at-infinity} 
Let $V\subset \mathbb C^n$ be an algebraic variety and let $I := \mathbf{I}(V)$. Then
$$C_{\infty}(V) = \mathbf{V}(I_\infty). $$
\end{lemma}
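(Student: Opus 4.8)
The plan is to identify $C_\infty(V)$ with $\mathbf V(I_\infty)$ by matching the ideal $I_\infty$ to the ideal $\mathbf{I}(C_\infty(V))$ via the characterization $C_\infty(V) = C_{a,\infty}(V)$ supplied by Theorem~\ref{Whitney-at-infinity}. First I would recall the basic dictionary between an ideal $I \subset \mathbb C[x_1,\dots,x_n]$ and its homogenization $I^h \subset \mathbb C[x_0,\dots,x_n]$: the projective variety $\mathbf V(I^h) \subset \mathbb P^n$ is the projective closure of the affine variety $\mathbf V(I) \subset \mathbb C^n$ (embedded in the chart $x_0 \neq 0$), and the hyperplane at infinity $\{x_0 = 0\}$ meets this closure precisely in $\mathbf V(I_\infty) \subset \mathbb P^{n-1} \cong \{x_0 = 0\}$, by the very definition $I_\infty = \langle g|_{x_0=0} : g \in I^h\rangle$. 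So the geometric content to establish is that, after identifying $\{x_0 = 0\}$ with $\mathbb C^n$ (now thought of as directions) and taking the affine cone, the points at infinity of $\overline{\mathbf V(I)}$ are exactly the geometric tangent directions at infinity of $V$.

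The key steps, in order: (1) Note $\mathbf I(V) = \sqrt{\mathbf I(V)}$ is radical, so reduce everything to the level of varieties rather than ideals — i.e. it suffices to show the two sets of points of $\mathbb P^n$ on the hyperplane at infinity agree, since by the Nullstellensatz $\mathbf V(I_\infty)$ as an affine cone equals $\mathbf V(\sqrt{I_\infty})$ and we will only be claiming a set-theoretic equality of cones. (2) Show $\mathbf V(I_\infty) \subseteq C_{g,\infty}(V)$: given a nonzero $v$ with $g|_{x_0=0}(v) = 0$ for all $g \in I^h$, the point $[0:v_1:\dots:v_n]$ lies in the projective closure $\overline{\mathbf V(I)}$; since it is not in the chart $x_0 \neq 0$, it is a limit of points $[1:x_k] = [1/t_k : x_k/t_k]$ with $x_k \in V$ and (after normalizing) $x_k/\|x_k\| \to v/\|v\|$, $\|x_k\| \to \infty$, which is exactly membership in $C_{g,\infty}(V)$ by the Remark following Lemma~\ref{Lemma2}. (3) For the reverse inclusion $C_{a,\infty}(V) \subseteq \mathbf V(I_\infty)$: take $v$ with $f^*(v) = 0$ for all $f \in \mathbf I(V)$; for such $f$ of degree $d$, the homogenization satisfies $f^h|_{x_0=0} = f^*$ (the top-degree part), so $f^*(v) = 0$ says exactly that $v$ kills the generators $f^h|_{x_0=0}$ of $I_\infty$ coming from $I = \mathbf I(V)$ itself. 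The subtlety is that $I^h$ has generators beyond $\{f^h : f \in I\}$ only in the sense of passing to a Gröbner basis — but $I^h$ as an ideal is by definition generated by all $f^h$, $f \in I$, so $g|_{x_0=0}$ for general $g \in I^h$ is a polynomial combination of the $f^h|_{x_0=0} = f^*$; hence $f^*(v) = 0$ for all $f \in I$ forces $g|_{x_0=0}(v) = 0$ for all $g \in I^h$. Then chain together with Theorem~\ref{Whitney-at-infinity}: $C_\infty(V) = C_{a,\infty}(V) \subseteq \mathbf V(I_\infty) \subseteq C_{g,\infty}(V) = C_\infty(V)$, giving equality.

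The main obstacle is step (3), specifically the claim that $g|_{x_0=0}$ for $g \in I^h$ lies in the radical of $\langle f^* : f \in I\rangle$ — or rather, verifying cleanly that setting $x_0 = 0$ is compatible with taking polynomial combinations. The point to be careful about is that if $g = \sum_j h_j f_j^h$ with $h_j \in \mathbb C[x_0,\dots,x_n]$, then $g|_{x_0=0} = \sum_j h_j|_{x_0=0} \, f_j^h|_{x_0=0} = \sum_j h_j|_{x_0=0}\, f_j^*$, so indeed $g|_{x_0=0} \in \langle f^* : f \in I\rangle$ as an honest ideal membership (no radical needed), and the vanishing at $v$ is immediate from $f^*(v) = 0$. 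Thus the inclusion $C_{a,\infty}(V) \subseteq \mathbf V(I_\infty)$ is in fact elementary and the real work was already done in proving Theorem~\ref{Whitney-at-infinity}; the present lemma is essentially a translation of that theorem into the language of homogenization ideals, plus the standard fact that $\{x_0=0\}\cap\overline{\mathbf V(I)}$ encodes the directions at infinity. If one prefers a fully self-contained argument, one can bypass Theorem~\ref{Whitney-at-infinity} and prove both inclusions $\mathbf V(I_\infty) \subseteq C_{g,\infty}(V)$ and $C_{a,\infty}(V) \subseteq \mathbf V(I_\infty)$ directly as above, then invoke Lemma~\ref{Lemma3} for the remaining inclusion $C_{g,\infty}(V)\subseteq C_{a,\infty}(V)$.
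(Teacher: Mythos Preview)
Your argument is correct, but it takes a considerably longer route than the paper's. The paper proves both inclusions purely algebraically in a few lines, using nothing beyond the identity $f^h|_{x_0=0}=f^*$ for $f\in I$. For $C_\infty(V)\subseteq\mathbf V(I_\infty)$: any generator of $I_\infty$ arises (up to the ideal operations you yourself spell out in step~(3)) as $g^h|_{x_0=0}=g^*$ for some $g\in I$, and $g^*$ vanishes on $C_{a,\infty}(V)$ by definition. For $\mathbf V(I_\infty)\subseteq C_\infty(V)$: given $x\in\mathbf V(I_\infty)$ and $f\in I$, one has $f^*=f^h|_{x_0=0}\in I_\infty$, hence $f^*(x)=0$, so $x\in C_{a,\infty}(V)$. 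That's the whole proof; Theorem~\ref{Whitney-at-infinity} is never invoked.

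Your detour through $C_{g,\infty}(V)$ in step~(2)---identifying $\mathbf V(I_\infty)$ with the hyperplane section at infinity of the projective closure $\overline V=\mathbf V(I^h)$, then using that Zariski and Euclidean closures agree over $\mathbb C$ to produce a sequence $x_k\in V$ with $t_k x_k\to v$---is valid and gives a pleasant geometric interpretation (indeed it is essentially the ``part at infinity'' picture the paper cites from \cite{GP2012} immediately after the lemma). But it imports nontrivial facts (saturation of $I^h$ with respect to $x_0$, equality of closures) to prove an inclusion that, as you yourself observe at the end, follows from the same elementary substitution $x_0=0$ you used in step~(3), just run in the other direction. The paper's approach is therefore shorter and self-contained; yours has the advantage of making explicit the link between $C_\infty(V)$ and $\overline V\cap\{x_0=0\}$, at the cost of relying on heavier background and on Theorem~\ref{Whitney-at-infinity} to close the loop.
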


\begin{proof} 
Let us take any $x=(x_1, \ldots,x_n)\in C_{\infty}(V)$ and $f\in I_\infty$. Then there exists $g\in I$ such that $f= g^h|_{x_0=0}$. Decomposing $g$ into homogeneous components as
$$ g = g_0+g_1+\cdots+g_d, $$
where $d$ is the degree of $g$, we have 
$$ g^h=g_0x_0^d + g_1x_0^{d-1}+\cdots+g_{d-1}x_0+g_d \in \mathbb C[x_0,x_1,\ldots,x_n].$$
It follows that 
$$f=g^h|_{x_0=0}=g_d=g^*. $$
By definition of $C_\infty(V)=C_{a,\infty}(V),$ we have $f(x) = g^*(x) = 0,$ and so $x\in \bold{V}(I_\infty).$ Therefore, $C_{\infty}(V)\subseteq \bold{V}(I_\infty).$

Conversely, let us  take any $x\in \bold{V}(I_\infty)$ and  $f\in I$. Assume we decompose $f$ into homogeneous components as
$$f=f_0+f_1+\cdots+f_e, $$
where $e$ is the degree of $f.$ Then
$$f^h=f_0x_0^e+f_1x_0^{e-1}+\cdots+f_{e-1}x_0+f_e \in I^h. $$
It follows that
$$f_e = f^h |_{x_0=0} \in I_\infty.$$
By definition of $\bold{V}(I_\infty),$ therefore
 $f^*(x)=f_e(x)=0.$ It follows that $x\in C_{a,\infty}(V)=C_\infty(V)$. The proof is complete.
\end{proof}
By \cite[Lemma A.4.15]{GP2012}, the variety $\bold{V}(I_\infty)\subset \mathbb C^n$ equals to the so-called {\em part at infinity} of $V.$ Hence, it follows from Lemma \ref{part-at-infinity} and the remark after \cite[Lemma A.4.15]{GP2012} the following  computation for $C_\infty(V) = \mathbf{V}(I_\infty)$.

\begin{corollary}\label{coro-groebner} Let $V\subset \mathbb C^n$ be an algebraic variety. Let $\{g_1,\ldots,g_k\}$ be a Gr\"{o}bner basis for $\bold{I}(V)$ with respect to a degree ordering,  then 
$$C_\infty(V) = \bold{V}(g_1^h|_{x_0=0}, \ldots, g_k^h|_{x_0=0}). $$

\end{corollary}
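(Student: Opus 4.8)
The plan is to derive Corollary~\ref{coro-groebner} from Lemma~\ref{part-at-infinity} together with the well-known behaviour of homogenization with respect to Gr\"obner bases for degree orderings. Writing $I := \mathbf{I}(V)$, Lemma~\ref{part-at-infinity} already gives $C_\infty(V) = \mathbf{V}(I_\infty)$, so it suffices to establish the ideal-theoretic identity
$$I_\infty = \bigl\langle g_1^h|_{x_0=0}, \ldots, g_k^h|_{x_0=0} \bigr\rangle ,$$
whence $\mathbf{V}(I_\infty) = \mathbf{V}(g_1^h|_{x_0=0}, \ldots, g_k^h|_{x_0=0})$ follows at once.

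One inclusion is immediate and uses nothing about Gr\"obner bases: each $g_i$ belongs to $I$, hence $g_i^h \in I^h$, hence $g_i^h|_{x_0=0} \in I_\infty$, so the right-hand ideal is contained in $I_\infty$. For the reverse inclusion I would invoke \cite[Lemma~A.4.15]{GP2012} and the remark following it: since $\{g_1, \ldots, g_k\}$ is a Gr\"obner basis of $I$ with respect to a degree ordering, the homogenizations $\{g_1^h, \ldots, g_k^h\}$ form a Gr\"obner basis — in particular a generating set — of the homogenized ideal $I^h \subset \mathbb{C}[x_0, x_1, \ldots, x_n]$. Now $I_\infty$ is, by definition, generated by the elements $g|_{x_0=0}$ with $g \in I^h$; writing such a $g$ as $g = \sum_{i=1}^{k} h_i g_i^h$ (possible because $\{g_i^h\}$ generates $I^h$) and applying the evaluation homomorphism $x_0 \mapsto 0$, which is a ring homomorphism $\mathbb{C}[x_0,\ldots,x_n] \to \mathbb{C}[x_1,\ldots,x_n]$, yields $g|_{x_0=0} = \sum_{i=1}^{k} (h_i|_{x_0=0})\,(g_i^h|_{x_0=0}) \in \langle g_1^h|_{x_0=0}, \ldots, g_k^h|_{x_0=0} \rangle$. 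This proves the displayed identity, and combining it with Lemma~\ref{part-at-infinity} gives $C_\infty(V) = \mathbf{V}(g_1^h|_{x_0=0}, \ldots, g_k^h|_{x_0=0})$.

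The entire weight of the argument rests on the cited Gr\"obner-theoretic fact that homogenization carries a degree-ordering Gr\"obner basis of $I$ to a generating set (indeed a Gr\"obner basis) of $I^h$; everything else is formal manipulation with the evaluation homomorphism, and I expect no difficulty there. This hypothesis is genuinely necessary and cannot be relaxed to an arbitrary generating set $\{f_1, \ldots, f_r\}$ of $I$: then one only obtains $\langle f_1^h|_{x_0=0}, \ldots, f_r^h|_{x_0=0} \rangle \subseteq I_\infty$, and the inclusion may be strict, exactly as in the example preceding Lemma~\ref{part-at-infinity}, where $f_i^h|_{x_0=0} = f_i^*$ and $\mathbf{V}(f_1^*, f_2^*)$ strictly contains $C_\infty(V) = \mathbf{V}(I_\infty)$. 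Passing to a Gr\"obner basis with respect to a degree ordering supplies precisely the additional generators of $I^h$ (equivalently, of $I_\infty$) that are otherwise missing.
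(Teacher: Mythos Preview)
Your argument is correct and follows exactly the route the paper indicates: you combine Lemma~\ref{part-at-infinity} with the Gr\"obner-theoretic fact from \cite[Lemma~A.4.15]{GP2012} (and the remark following it) that for a degree ordering the homogenizations $g_i^h$ generate $I^h$, and then pass to $x_0=0$ via the evaluation homomorphism. The paper states this in a single sentence and leaves the details implicit; you have simply unpacked them.
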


\begin{example}{\rm 
Let us compute the tangent cone at infinity of the algebraic variety $V \subset \mathbb C^3$ defined by the ideal $I \subset \mathbb C[x,y,z]$ generated by polynomials $f_1 :=xy $ and $ f_2 := z(x^3-y^2+z^2)$ as considered at the beginning of this section. 

It is easy to compute a Gr\"{o}bner basis for $I$ with respect to a degree ordering is
$$\{g_1=xy, \quad g_2=x^3z-y^2z+z^3, \quad  g_3=yz(y^2-z^2)\}. $$
The homogenization of these polynomials by a new variable $t$ in the polynomial ring $\mathbb C[x,y,z,t]$:
 $$g_1^h=xy, \quad g_2^h=x^3z-y^2zt+z^3t, \quad  g_3^h=yz(y^2-z^2). $$
Substituting $t=0$, using Corollary \ref{coro-groebner}, we have 
$$ C_\infty(V)=\bold{V}(xy, x^3z, yz(y^2-z^2)),$$
which is a union of 5 lines through the origin in $\mathbb C^3$.
}\end{example}

\subsection*{Acknowledgment} 
The authors would like to express their gratitude to Prof. Gert-Martin Greuel for showing  them a quick procedure to compute the (algebraic) tangent cone at infinity, improving the one given in the original version of the paper. This research was partially performed while the second author had been visiting at Vietnam Institute for Advanced Study in Mathematics (VIASM). He would like to thank the Institute for hospitality and support.

\end{document}